\newtheorem{theorem}{Theorem}[section]
\newtheorem{lemma}{Lemma}[section]
\theoremstyle{definition}
\theoremstyle{remark}
\newtheorem{remark}[theorem]{Remark}
\numberwithin{equation}{section}
\newcommand{\abs}[1]{\lvert#1\rvert}
\newcommand{\C}{\mathbb{C}}
\newcommand{\Z}{\mathbb{Z}}
\newcommand{\N}{\mathbb{N}}
\newcommand{\ov}{\overline}
\newcommand{\begeq}{\begin{equation}}
\newcommand{\stopeq}{\end{equation}}
\newcommand{\ep}{\epsilon}
\newcommand{\dis}{\displaystyle}
\newcommand{\pa}{\partial}
\newcommand{\ei}[1]{\mathrm{e}^{#1}}
\newcommand{\Om}{\Omega}
\newcommand{\ta}{\theta}
\newcommand{\begar}{\begin{array}}
\newcommand{\stopar}{\end{array}}
\newcommand{\norm}[1]{\left\lVert#1\right\rVert}
\begin{document}
\title[Singular CR equation]
{A Generalized CR equation with isolated singularities.}
\author{B. De Lessa Victor}
\thanks{\small The first author was financed in part by the Coordenação de Aperfei\c coamento de Pessoal de N\'ivel Superior - Brasil (CAPES) - Finance Code 001 and in part by FAPESP (grant number 2021/03199-9) }
\address{\small Departamento de Matem\'atica, Instituto de Ciências Matemáticas e de Computação (ICMC), Universidade de São Paulo (USP), São Carlos, SP, Brazil.}
\email{brunodelessa@gmail.com}

\author{Abdelhamid Meziani}
\address{\small Department of Mathematics, Florida International University, Miami, FL, 33199, USA}
\email{meziani@fiu.edu}

\subjclass[2020]{Primary: 30G20; Secondary:  	35F05 }

\keywords{CR equation; Integral operator; singular points}

\begin{abstract}
The generalized CR equation
$u_{\ov{z}}=au+b\ov{u}+f$ is studied when the coefficients $a$ and $b$ have a finite number of
singular points inside the domain. Solutions are constructed via the study of an associated integral
operator and the existence of nontrivial solutions of the associated homogeneous equation is established.
\end{abstract}
\maketitle

\section{Introduction}
The study of generalized CR equations
\[
\frac{\pa u}{\pa\ov{z}}=a(z)u+b(z)\ov{u}+f(z)
\]
in a domain $\Om\subset\C$ was initiated by L. Bers and I.N. Vekua
in \cite{Bers} and \cite{Vek}. 
This equation is of fundamental importance and has applications in many areas
(see for example \cite{Kra} and \cite{Rod} and the references therein).
The initiators of the theory considered the elliptic case when the coefficients are in $L^p(\Om)$ with $p>2$
and this situation is now well understood (see \cite{Beg} for a comprehensive presentation).
The case of degenerate coefficients (either on the boundary of the domain or inside the domain) is
of current interest.
Of particular interest to us, and in view of application to the study of deformation of surfaces
\cite{Bru-Mez}, we consider equations involving a finite number of isolated singular points.
Such type of equations were considered in \cite{Beg-Dai},\cite{Mez1}, \cite{Mez2},
\cite{Ras-SolB}, \cite{Ras-Sol},  \cite{Usm1}, \cite{Usm2}.

In this paper we consider the equation
\[
\frac{\pa u}{\pa\ov{z}}=\frac{A(z)}{L(z)}u+\frac{B(z)}{L(z)}\ov{u}+F(z),
\]
where $\dis L(z)=\prod_{j=1}^N(z-z_j)$ and $z_1,\cdots, z_N$ are distinct points in the domain
$\Om$.  It should be noted that  the case $N=1$ is studied  in \cite{Mez1}, through
the use of associated systems of ordinary differential equations when the coefficients $A,\ B$ depend only on the argument $\theta$ of $z$ and in \cite{Mez2} when the coefficients could also depend on $|z|$
but with small norm. The main results, Theorems \ref{Theo1} and \ref{Theo2} describe the solutions of such equations.
To prove Theorem \ref{Theo1},  we make use of and   associated integral operator \eqref{TLm1} and its adjoint with respect to a real bilinear form, which is inspired by the
recent result in \cite{Ras-Sol} by A.B. Rasulov and A.P. Soldatov  when the case of a single singular point and
small coefficients is considered. Theorem \ref{Theo2} shows the existence of nontrivial solutions
for the homogeneous equation ($F=0$).

\section{Main Results}

Let $\Om\subset\C$  be a relatively compact domain, $S=\{ z_1,\,\cdots ,\, z_N\}$ be a collection of $N$ distinct points in $\Om$ and
$A(z),\, B(z)\, \in L^\infty(\Om)\cap C^\infty(\ov{\Om}\backslash S)$.
Assume that for every $j\in \{1,\,\cdots ,\, N\}$, there exist
$0<\tau_j<1$, $\ \delta_j >0$, $\ 2\pi$-periodic functions $p_j(\ta)$, $q_j(\ta)$ and functions $A_j(r\ei{i\ta})$, $ B_j(r\ei{i\ta})$
such that $A_j,\, B_j,\, \in L^\infty (D(0,\delta_j))\cap C^\infty (D(0,\delta_j)\backslash\{0\})$,
where  $D(0,\delta_j)$ denotes the open disc centered at $0$ and with 
radius $\delta_j$, and
\begin{equation}\label{AB-condition}
\begar{ll}
\dis A(z_j+r\ei{i\ta}) & = \dis p_j(\ta)+r^{\tau_j}A_j(r\ei{i\ta}) \\
\dis B(z_j+r\ei{i\ta}) & = \dis q_j(\ta)+r^{\tau_j}B_j(r\ei{i\ta})\, .
\stopar
\end{equation}

Consider
\begin{equation}\label{gammajML}
\begar{l}
\dis \gamma_j=\frac{1}{\pi}\int_0^{2\pi}\!\!\ei{-2i\ta}p_j(\ta)\, d\ta,  \ \ j=1,\cdots \, N,\\
 \dis L(z)=\prod_{j=1}^N(z-z_j)\quad\text{and}\quad
M(z)=\prod_{j=1}^N|z-z_j|^{\gamma_j}.
\stopar
\end{equation}
 Our goal is to understand the solutions
of the equation
\begin{equation}\label{CRequation1}
\frac{\pa u}{\pa\ov{z}}=\frac{A(z)}{L(z)}u+\frac{B(z)}{L(z)}\ov{u}+F(z),
\end{equation}
when the nonhomogeneous term $F(z)$ vanishes at the set of singular points $\left\{z_1, \ldots, z_N\right\}$.

For positive numbers $m$ and $p$, with $m\in\Z^+$ and $p>1$, consider the Banach space
\[
E_{m,p}=\left\{ f:\Om\,\longrightarrow\,\C\, :\ \frac{f(z)}{L(z)^m}\,\in\, L^p(\Om)\right\}
\]
equipped with the norm
\[
\norm{f}_{m,p}=\norm{\frac{f(z)}{L(z)^m}}_{L^p(\Om)}\, .
\]
The main results of this paper are the following theorems.

\begin{theorem}\label{Theo1}
Let $A$ and $B$ be functions satisfying (\ref{AB-condition}), $m\in\Z^+$ and $p>2$.
Then for every function $F$ in $\Om$ such that $\dis\frac{F(z)}{M(z)}\in E_{m+1,p}(\Om)$,
where $M(z)$ is given in (\ref{gammajML}), there exists a function
$v\in E_{m,p}(\Om)\cap C^\alpha (\Om\backslash S)$, with $\dis \alpha =(p-2)/p$ such that
the function $u(z)=v(z)M(z)$ is a solution of the equation (\ref{CRequation1}).
Moreover, if in addition $F\in C^{k,\sigma}(\Om\backslash S)$ with $k\in\Z^+$ and $0<\sigma<1$, then
$u\in C^{k+1,\sigma}(\Om\backslash S)$.
\end{theorem}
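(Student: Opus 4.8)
\emph{Approach.} The plan is to reduce \eqref{CRequation1} to a better-behaved equation by the explicit change of unknown $u=vM$, recast the resulting equation as a fixed-point problem for the integral operator \eqref{TLm1}, and then read off the stated regularity from the mapping properties of the Cauchy--Pompeiu transform $T_0$.

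\emph{Step 1 (normalizing substitution).} Since $M(z)=\prod_j|z-z_j|^{\gamma_j}$ is real-analytic and nonvanishing on $\Om\setminus S$, a direct computation gives $M_{\ov z}/M=\sum_{j=1}^N \gamma_j/\big(2(\ov z-\ov z_j)\big)$, so that $u=vM$ solves \eqref{CRequation1} if and only if $v$ solves $v_{\ov z}=a_1 v + b_1\ov v + f_1$ with $a_1=A/L-M_{\ov z}/M$, $b_1=(B/L)(\ov M/M)$ and $f_1=F/M$. The weight $M$ is chosen precisely so that $|z-z_j|^{\gamma_j}$ reproduces the leading singular factor of homogeneous solutions of $u_{\ov z}=(A/L)u$ near $z_j$, the coefficient $\gamma_j$ in \eqref{gammajML} being the $\log|z-z_j|$-weight of that factor; consequently the resonant part of the singularity is removed and $a_1,b_1$, though still of size $|z-z_j|^{-1}$, no longer contain the obstructing angular mode. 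Using \eqref{AB-condition} and the hypothesis $F/M\in E_{m+1,p}$ one checks that $f_1/L^m\in L^p(\Om)$ and in fact vanishes to one extra order in $L$ at $S$.

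\emph{Step 2 (integral equation).} Because $L^m$ is holomorphic, the operator \eqref{TLm1}, which satisfies $\pa_{\ov z}(T_{L,m}g)=g$ and factors as $T_{L,m}g=L^m\,T_0(g/L^m)$, converts the equation for $v$ into
\[
(I-K)v = T_{L,m}f_1 + \Phi,\qquad Kv := T_{L,m}\!\left(a_1 v + b_1\ov v\right),
\]
where $\Phi$ has the form $L^m\times(\text{holomorphic})$. I would solve this in $E_{m,p}$ either by a Neumann series, after localizing on the discs $D(z_j,\delta_j)$ where the remainders $r^{\tau_j}A_j,\,r^{\tau_j}B_j$ and the scales $\delta_j$ provide smallness, or by the Fredholm alternative using the adjoint of $T_{L,m}$ with respect to the real bilinear form, following \cite{Ras-Sol}. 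The extra vanishing of $f_1/L^m$ secured in Step 1 is what compensates for the single power of $L$ lost to the singular coefficients in $K$, keeping the iteration inside $E_{m,p}$. Membership $T_{L,m}(L^p)\subset E_{m,p}$ together with the bound $T_0:L^p(\Om)\to C^{(p-2)/p}(\ov{\Om})$ for $p>2$ then gives $v/L^m\in C^{\alpha}(\ov{\Om})$, whence $v\in E_{m,p}(\Om)\cap C^{\alpha}(\Om\setminus S)$ with $\alpha=(p-2)/p$.

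\emph{Main obstacle and regularity.} The genuine difficulty is the borderline singularity: $a_1,b_1$ lie in $L^s_{\mathrm{loc}}$ only for $s<2$, so $K$ falls outside the classical Vekua class $L^p$ with $p>2$ and the similarity principle does not apply directly. The heart of the argument is therefore to prove that, after the resonance has been removed in Step 1, $K$ is compact (or contractive on the $D(z_j,\delta_j)$) on $E_{m,p}$ despite the $|z-z_j|^{-1}$ factor; this requires sharp weighted estimates for $T_0$ against the singular kernel and careful tracking of the angular modes $p_j,q_j$ from \eqref{AB-condition}, and is exactly what the construction of \eqref{TLm1} is designed to supply. The final higher-regularity claim is then local and routine: on $\Om\setminus S$ the coefficients $A/L,\,B/L$ are $C^\infty$, so interior elliptic estimates for $\pa_{\ov z}$ bootstrap $F\in C^{k,\sigma}$ into $u\in C^{k+1,\sigma}(\Om\setminus S)$.
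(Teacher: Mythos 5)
Your overall architecture (weighted substitution, associated integral operator, Fredholm theory) is in the right spirit, but two load-bearing steps have genuine gaps. The first is the substitution. Setting $u=vM$ directly does not do what you claim: the residual coefficient $a_1=A/L-M_{\ov z}/M$ still carries every angular Fourier mode of $p_j(\ta)$ except the single one recorded by $\gamma_j$, so it remains a genuine $|z-z_j|^{-1}$ singularity multiplying $v$ (not a remainder), and your operator $K$ then has both a $v$ and a $\ov v$ term with a non-holomorphic singular denominator, which is not the operator \eqref{TLm1} that the adjoint calculus is built for. The paper instead solves $\pa w/\pa\ov z=A/L$ exactly (Lemma \ref{functionw}), getting $w=\sum_j\gamma_j\log|z-z_j|+\mu$ with $\mu$ \emph{bounded}, and sets $v=\ei{-w}u$; this makes the coefficient of $v$ vanish identically, leaving only the conjugate term $B_1\ov v/L$ with $|B_1|=|B|$. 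That is why the whole operator theory can be set up for \eqref{TLm1} alone.

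The second and more serious gap is the inversion of $I-K$. The Neumann-series option fails because the leading parts $p_j(\ta)/r$, $q_j(\ta)/r$ are scale-invariant: the Cauchy transform composed with multiplication by $c(\ta)/r$ has operator norm that does not shrink as the disc radius shrinks, so only the $r^{\tau_j-1}$ remainders become small, and the theorem assumes no smallness of $p_j,q_j$ (the paper stresses there is no restriction on the size of the coefficients). The Fredholm option is the paper's actual route, but invoking the Fredholm alternative only reduces existence to orthogonality of the right-hand side to $\textrm{Ker}(P^\ast_{L,m})$, and there is no reason the primitive of $F$ satisfies this. The missing idea is Lemma \ref{EmpDecomposition}: $E_{m,p}(\Om)=\textrm{Range}(P_{L,m})+H_{m,p}(\Om)$, proved by a unique-continuation argument showing that any element of $\textrm{Ker}(P^\ast_{L,m})$ orthogonal to all holomorphic members of $E_{m,p}(\Om)$ vanishes. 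This permits correcting the Cauchy--Pompeiu primitive $f$ of $F$ by a holomorphic function (which leaves $\pa f/\pa\ov z=F$ unchanged) so that it lands in the range; without it no solution is produced. A minor point: as defined in \eqref{TLm1}, $T_{L,m}$ already contains the factor $B\ov{(\cdot)}/L$, so $\pa_{\ov z}T_{L,m}u=B\ov u/L$, not $u$. Your final $C^{k+1,\sigma}$ bootstrap away from $S$ is indeed routine.
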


\begin{remark}
In the paper \cite{Ras-Sol}, equation (\ref{CRequation1}) is studied in the presence of a single singular point $p_0$ (so $N=1$) and when the coefficient $B$ has small norm. In this case, the authors prove the existence of solutions of the form $v(z)/|z-p_0|^a$ with $a<1$.
 In our case, we only require the number of singular points to be finite and there is no restriction on the size of the norms of the coefficients.
\end{remark}

\begin{theorem}\label{Theo2}
Let $A$ and $B$ be functions satisfying (\ref{AB-condition}) and $k\in\Z^+$.
The homogeneous equation
\begin{equation}\label{CRequationh}
\frac{\pa u}{\pa\ov{z}}=\frac{A(z)}{L(z)}u+\frac{B(z)}{L(z)}\ov{u}
\end{equation}
has non trivial solutions in $C^k(\Om)$. Moreover, for any $a>0$, a nontrivial
solution
$u$ can be chosen so that $u$ vanishes to an order $\ge a$ at each
singular point $z_j$.
\end{theorem}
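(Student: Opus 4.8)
The plan is to deduce Theorem \ref{Theo2} from Theorem \ref{Theo1} by perturbing a holomorphic function that vanishes to high order at the singular points. Given $a>0$ and $k\in\Z^+$, fix an integer $n$ (to be chosen large) and set $u_0(z)=L(z)^n$, which is holomorphic on $\Om$ and vanishes to order exactly $n$ at each $z_j$. I look for a solution of \eqref{CRequationh} of the form $u=u_0+w$. Since $u_0$ is holomorphic, $(u_0)_{\ov z}=0$, and substituting into \eqref{CRequationh} shows that $u$ solves the homogeneous equation if and only if $w$ solves the inhomogeneous equation \eqref{CRequation1} with forcing term
\[
F_0=\frac{A\,u_0+B\,\ov{u_0}}{L}=\frac{A\,L^{n}+B\,\ov{L^{n}}}{L}.
\]
Thus the whole problem reduces to producing $w$ via Theorem \ref{Theo1} and then checking that the sum $u_0+w$ is nontrivial and has the claimed regularity and vanishing order.

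First I would verify the hypothesis of Theorem \ref{Theo1}, namely $F_0/M\in E_{m+1,p}(\Om)$. Away from $S$ this is immediate, since $A,B\in L^\infty(\Om)$ and $L$ does not vanish. Near a singular point $z_j$ one has $|A|,|B|\le C$, $|L(z)|\sim|z-z_j|$, and $|M(z)|\sim|z-z_j|^{\operatorname{Re}\gamma_j}$, so
\[
\left|\frac{F_0}{M\,L^{m+1}}\right|\le C\,|z-z_j|^{\,n-(m+2)-\operatorname{Re}\gamma_j}.
\]
This is $p$-integrable in a punctured neighborhood of $z_j$ precisely when $p\bigl(n-(m+2)-\operatorname{Re}\gamma_j\bigr)>-2$, i.e.\ when $n>m+2+\operatorname{Re}\gamma_j-2/p$. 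Choosing $n$ larger than $\max_j\bigl(m+2+\operatorname{Re}\gamma_j\bigr)$ and larger than $\max(a,k+1)$ guarantees simultaneously that $F_0/M\in E_{m+1,p}(\Om)$ and that the vanishing and regularity requirements below can be met. Theorem \ref{Theo1} then yields $v\in E_{m,p}(\Om)\cap C^\alpha(\Om\backslash S)$ such that $w=vM$ solves \eqref{CRequation1} with $F=F_0$, and consequently $u=u_0+w=L^{n}+vM$ solves \eqref{CRequationh}.

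It remains to address regularity, vanishing order, and nontriviality. The polynomial $u_0=L^{n}$ is smooth and vanishes to order $n$; for $w=vM$ the only obstruction to smoothness at $z_j$ is the non-smooth factor $|z-z_j|^{\gamma_j}$ in $M$, which is suppressed once $v$ vanishes to sufficiently high order there. Since $F_0$ vanishes like $|z-z_j|^{\,n-1}$, I expect the mapping properties of the integral operator \eqref{TLm1} underlying Theorem \ref{Theo1} to force $v$, and hence $w$, to vanish to a correspondingly high order; taking $n>\max(a,k+1)$ then gives $u\in C^k(\Om)$ together with $u$ vanishing to order $\ge a$ at each $z_j$.

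The hard part is nontriviality. A priori the particular solution $w$ furnished by Theorem \ref{Theo1} could equal $-u_0$, making $u\equiv 0$, and this cannot be excluded by the norm estimate alone, since rescaling $u_0\mapsto\lambda u_0$ rescales $w$ identically. To rule it out I would examine the leading asymptotic behavior of $u$ at a single singular point $z_j$: the holomorphic part contributes the pure term $\sim \ell_j^{\,n}(z-z_j)^{n}$, with $\ell_j=\prod_{k\neq j}(z_j-z_k)\neq 0$, whereas the correction $w=vM$ carries the factor $|z-z_j|^{\gamma_j}$ and therefore has a genuinely different homogeneity (and, when $\gamma_j\notin\Z$, a non-holomorphic $\ov z$-dependence) in its leading term. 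Showing that these two contributions cannot cancel — equivalently, that the constructed $w$ is strictly subordinate to $u_0$ near $z_j$ in the expansion produced by the integral operator — establishes $u\not\equiv 0$. This asymptotic analysis at the singular points, rather than the reduction itself, is where the essential work lies.
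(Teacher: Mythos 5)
The reduction $u=L^n+w$ is fine as far as it goes, but your proof has a genuine gap at exactly the point you yourself flag as ``the hard part'': nontriviality is not established, and the route you sketch for it cannot work as stated. Note that $-L^n$ is itself a solution of \eqref{CRequation1} with $F=F_0$, and for $n$ large it is of the admissible form $vM$ with $v=-L^n/M\in E_{m,p}(\Om)\cap C^\alpha(\Om\backslash S)$; so nothing in the statement of Theorem \ref{Theo1} prevents the solution it furnishes from being exactly $-L^n$. Worse, the logic is essentially circular: if the homogeneous equation had no nontrivial solution in the class $\{vM:\ v\in E_{m,p}(\Om)\}$ --- precisely what you are trying to refute --- then the solution of the inhomogeneous problem in that class would be unique, hence equal to $-L^n$, forcing $u\equiv 0$. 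The asymptotic separation you propose (``$w=vM$ carries the factor $|z-z_j|^{\gamma_j}$, hence a different homogeneity'') does not rescue this: $v$ is only known to lie in $E_{m,p}(\Om)\cap C^\alpha(\Om\backslash S)$ and need not have a leading homogeneous term, and $\gamma_j$ can be $0$ (e.g.\ when $p_j\equiv 0$), in which case $M$ is trivial near $z_j$ and there is no distinguishing factor at all. The secondary claims --- that $w$ is $C^k$ across $S$ and vanishes to order $\ge a$ --- are also only asserted (``I expect the mapping properties \dots to force $v$ to vanish''); Remark \ref{vanishingorder} gives high-order vanishing by increasing $m$, but your $F_0$ vanishes only to the finite order roughly $n-1$, so the interplay between $n$ and $m$ needs an actual argument.

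The paper takes a different route precisely to sidestep this cancellation problem: it first produces genuinely nontrivial local solutions $v_j$ near each $z_j$ (from \cite{Mez1} and \cite{Mez2}), glues them with cutoffs $\phi_j$, corrects by a $w$ obtained from Theorem \ref{Theo1} with forcing supported away from $S$, and then proves nontriviality by contradiction: if $u\equiv 0$ for \emph{every} choice of cutoffs, comparing two choices yields, via Vekua's representation formula, an integral identity that fails for a suitably chosen test function $\psi$ on a small disc where $B$ and $v_1$ do not vanish. Some argument of this kind, exploiting a degree of freedom in the construction, seems unavoidable; your single rigid choice $u_0=L^n$ leaves no such freedom to vary.
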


The rest of the paper deals with the proof of these results.

\section{Reduction to the case $A=0$}

In this section we show that the solvability of equation (\ref{CRequation1})
can be reduced to an analogous equation where the coefficient $A=0$.
For this we start by proving the following lemma.

\begin{lemma}\label{functionw}
For $j=1,\,\cdots ,\, N$ let $\gamma_j$ be as in (\ref{gammajML}).
Then there exists a function $\mu\in L^\infty(\Om)\cap C^\infty(\Om\backslash S)$ such
that
\begin{equation}\label{w}
w(z)=\sum_{j=1}^N\gamma_j\log|z-z_j|\, +\mu(z)
\end{equation}
satisfies
\begin{equation}\label{w-equation}
\frac{\pa w(z)}{\pa\ov{z}}=\frac{A(z)}{L(z)}.
\end{equation}
\end{lemma}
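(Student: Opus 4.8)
The plan is to localize the problem at the singular set $S$. Away from $S$ the right-hand side $A/L$ lies in $L^\infty\cap C^\infty$, whereas near each $z_j$ it behaves like $p_j(\ta)/(L'(z_j)(z-z_j))$, i.e.\ like $1/|z-z_j|$, which is \emph{borderline} non-integrable: it just fails to belong to $L^2$. Consequently the Cauchy--Pompeiu (Vekua) transform $Tg(z)=-\frac1\pi\int_\Om \frac{g(\zeta)}{\zeta-z}\,dm(\zeta)$ (with $dm$ planar Lebesgue measure), which satisfies $\frac{\pa (Tg)}{\pa\ov z}=g$ and maps $L^p(\Om)\to C^{(p-2)/p}(\ov\Om)$ boundedly for $p>2$, cannot be applied to $A/L$ directly. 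The strategy is therefore to peel off, near each $z_j$, an explicit local primitive carrying the logarithmic singularity, and only then to invert an $L^p$ remainder by a single application of $T$.

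For the local model, fix $j$ and set $\zeta=z-z_j=r\,\ei{i\ta}$. Writing $L(z)=(z-z_j)\prod_{k\ne j}(z-z_k)$ and using \eqref{AB-condition}, a short expansion gives $\dfrac{A(z)}{L(z)}=\dfrac{\psi_j(\ta)}{z-z_j}+R_j(z)$, where $\psi_j(\ta)$ is the leading angular profile (obtained from $p_j$ after dividing by the value of $\prod_{k\ne j}(z-z_k)$ at $z_j$), and $R_j$ collects two error types: a \emph{bounded} term coming from the Taylor expansion of $\prod_{k\ne j}(z-z_k)^{-1}$, and a term of size $O(r^{\tau_j-1})$ coming from the $r^{\tau_j}A_j$ correction in \eqref{AB-condition}. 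Since $\tau_j>0$, the latter lies in $L^p$ for every $p<2/(1-\tau_j)$, so $R_j\in L^p_{\mathrm{loc}}$ for a genuine range of exponents $p>2$; this is the one place where the hypothesis $\tau_j>0$ is essential.

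The heart of the argument is to solve the model equation $\frac{\pa w_j}{\pa\ov z}=\frac{\psi_j(\ta)}{z-z_j}$ by an ansatz homogeneous of degree zero, $w_j=\Phi_j(\ta)+\beta_j\log r$. Using $\frac{\pa}{\pa\ov z}=\frac{\ei{i\ta}}{2}\big(\pa_r+\frac{i}{r}\pa_\ta\big)$ and substituting leads to the first-order ODE $\beta_j+i\Phi_j'(\ta)=2\,\psi_j(\ta)\,\ei{-2i\ta}$. A $2\pi$-periodic (hence bounded) solution $\Phi_j$ exists precisely when the right-hand side has zero mean, which forces $\beta_j=\frac1\pi\int_0^{2\pi}\ei{-2i\ta}\,\psi_j(\ta)\,d\ta$; a direct computation identifies this with the constant $\gamma_j$ of \eqref{gammajML}. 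Thus the resonant Fourier mode $\ei{2i\ta}$ of $p_j$ is the unique obstruction to a bounded local primitive, and it is exactly what is absorbed into the coefficient $\gamma_j$ of $\log|z-z_j|$; every other mode integrates into the bounded function $\Phi_j$.

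To globalize, I would choose cutoffs $\chi_j\in C_c^\infty(D(z_j,\delta_j))$ with pairwise disjoint supports and $\chi_j\equiv1$ near $z_j$, set $W=\sum_j\chi_j w_j$, and put $g=\frac{A}{L}-\frac{\pa W}{\pa\ov z}$. On each $\{\chi_j\equiv1\}$ the principal parts cancel and leave $R_j\in L^p$, while the commutator terms $w_j\,\frac{\pa\chi_j}{\pa\ov z}$ are bounded with compact support; hence $g\in L^p(\Om)$ and $\mu_0:=Tg\in C^{(p-2)/p}(\ov\Om)\subset L^\infty$, smooth off $S$ by ellipticity of $\pa/\pa\ov z$. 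Then $w=W+\mu_0$ solves \eqref{w-equation}, and $\mu:=w-\sum_j\gamma_j\log|z-z_j|=\sum_j(\chi_j-1)\gamma_j\log|z-z_j|+\sum_j\chi_j\Phi_j+\mu_0$ is bounded (the factors $\chi_j-1$ kill the logarithms near $S$, and $\Om$ is relatively compact) and lies in $C^\infty(\Om\backslash S)$, giving the decomposition \eqref{w}. The hard part is the critical integrability of $A/L$: everything hinges on extracting the precise logarithmic part via the solvability condition of the third step, so that the residual source $g$ becomes genuinely $L^p$ with $p>2$ and a single application of $T$ is legitimate.
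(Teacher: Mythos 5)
Your proposal is correct and follows essentially the same route as the paper: localize with cutoffs, solve the angular model ODE whose $2\pi$-periodic solvability condition forces the coefficient $\gamma_j$ of $\log|z-z_j|$ (the paper writes this as splitting the $1/r$ coefficient into $\gamma_j$ plus a zero-average part $\hat{p_j}$), and absorb the $O(r^{\tau_j-1})$ remainder via the Cauchy--Pompeiu operator in $L^p$ with $2<p<2/(1-\tau_j)$. The only cosmetic difference is that you subtract all local parametrices at once and invert a single global $L^p$ remainder, whereas the paper solves the $N+1$ localized equations $\pa w_j/\pa\ov{z}=A\phi_j/L$ separately and sums.
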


\begin{proof}
Let $\delta >0$ be such that  the discs $D(z_j,2\delta)$, with $j=1, \cdots, N$, are contained in $\Om$ and are pairwise disjoint. Let $\phi_1,\,\cdots ,\, \phi_N\in C^\infty(\C)$ such that
\begin{equation*}
\text{$\phi_j\equiv 1$ in the disc $D(z_j,\delta)$, \ \  $\textrm{Supp}(\phi_j)\subset D(z_j,2\delta)$}
\end{equation*}
and set $\dis \phi_0=1-\sum_{j=1}^N\phi_j$. Note that $\phi_0\equiv 1$ in
$\dis \Om\backslash (\bigcup_{j=1}^ND(z_j,2\delta))$ and
$\phi_0\equiv 0$ in
$\dis \bigcup_{j=1}^ND(z_j,\delta)$.
The solvability of the equation (\ref{w-equation}) can be reduced to those of the $N+1$ equations
\begin{equation}\label{wj-equation}
\frac{\pa w_j}{\pa\ov{z}} =\frac{A\phi_j}{L},\quad j=0,\, \dots ,\, N
\end{equation}
and taking $\dis w=\sum_{j=0}^Nw_j$.  Note that since $\phi_0\equiv 0$ in
$\dis \bigcup_{j=1}^ND(z_j,\delta)$, it follows that $\dis\frac{A\phi_0}{L}\in C^\infty(\ov{\Om})$
and so for $j=0$, equation (\ref{wj-equation}) has a solution
$w_0\in C^\infty(\ov{\Om})$ (see \cite{Beg}).

For $j=1,\,\cdots ,\, N$, we use polar coordinates around the point $z_j$, that is, set
$z=z_j+r\ei{i\theta}$,  and use property (\ref{AB-condition}) of the function $A$
   to transform equation (\ref{wj-equation})
into an equation of the form
\begin{equation}\label{wj-polarequation}
\frac{\pa w_j}{\pa r}+\frac{i}{r}\frac{\pa w_j}{\pa\theta} =\frac{\gamma_j +\hat{p_j}(\ta)}{r}
+r^{\tau_j-1}c_j(r,\ta)\, ,
\end{equation}
where $\hat{p_j}(\ta)$ is a $2\pi$-periodic, $C^\infty$ function with zero average and
$c_j(r,\ta)$ is a bounded function, $C^\infty$ for $r>0$. Since $r^{\tau_j-1}c_j\in L^p(\overline{\Omega})$ with $\dis 2<p<\frac {2}{1-\tau_j}$, then  equation
\[
\frac{\pa v_j}{\pa r}+\frac{i}{r}\frac{\pa v_j}{\pa\theta} =r^{\tau_j-1}c_j(r,\ta)\, ,
\]
has a solution $v_j\in C^\alpha(\Om)\cap C^\infty(\ov{\Om}\backslash\{z_j\})$ with $\alpha =(p-2)/p$
(see \cite{Beg}).

The function
\[
\zeta_j(r,\ta)=\gamma_j\log r -i\int_0^\ta \hat{p_j}(s)\, ds
\]
satisfies
\[
\frac{\pa \zeta_j}{\pa r}+\frac{i}{r}\frac{\pa \zeta_j}{\pa\theta} =\frac{\gamma_j +\hat{p_j}(\ta)}{r}\, .
\]
It follows that the function
\[
w_j(r,\ta)=\zeta_j(r,\ta)+v_j(r,\ta)=\gamma_j\log r+ \underbrace{\left(v_j(r,\ta)-i\int_0^\ta \hat{p_j}(s)\, ds\right)}_{:= \mu_{j}}
\]
solves equation (\ref{wj-equation}).
Therefore
\[
w(z)=\sum_{j=1}^N \gamma_{j} \log|z-z_j|+\, \mu(z),
\]
with $\dis \mu(z)=w_0(z)+ \sum_{j=1}^N \mu_{j}(z)$ is the desired
solution of equation \eqref{w-equation}.
\end{proof}

With $w(z)$ given by  in Lemma (\ref{functionw}), the function
\[
\ei{w(z)}=\left(\prod_{j=1}^N|z-z_j|^{\gamma_j}\right)\, \ei{\mu(z)}=M(z)\, \ei{\mu(z)}
\]
is smooth in $\ov{\Om}\backslash S$. A function $u(z)$ satisfies equation (\ref{CRequation1}) if and
only if the function
\[
v(z)=\ei{-w(z)}u(z)=\ei{-\mu(z)}\, \frac{u(z)}{M(z)}
\]
solves the equation
\begin{equation}\label{CRequation2}
\frac{\pa v}{\pa\ov{z}}=\frac{B_1(z)}{L(z)}\ov{v}\, +F_1(z)\, ,
\end{equation}
with
\[
B_1(z)  = B(z)\ei{\ov{w(z)}-w(z)}\ \ \textrm{and}\ \
F_1(z)  = \frac{\ei{-\mu(z)}F(z)}{M(z)}\, .
\]
Note $|B_1(z)|=|B(z)|$ and that $F_1\in E_{m+1,p}(\Om)$ if and only if $\dis\frac{F}{M}\in E_{m+1,p}(\Om)$.
Thanks to this reduction, from now on we will assume that $A=0$ and consider the equation
\begin{equation}\label{CRequation3}
\frac{\pa u}{\pa\ov{z}}=\frac{B(z)}{L(z)}\ov{u}\, +F(z)\, .
\end{equation}

\section{Properties of an associated integral operator}

For $L(z)$ as given in (\ref{gammajML}) and $m\in \Z^+$, consider the integral operator
$T_{L,m}$ defined by
\begin{equation}\label{TLm1}
T_{L,m}u(z)=\frac{-L(z)^m}{\pi}\int_\Om\frac{B(\zeta)\ov{u(\zeta)}}{L(\zeta)^{m+1}(\zeta-z)}\, d\xi d\eta
\end{equation}
where $\zeta=\xi+i\eta$. We have the following lemma.

\begin{lemma}\label{lemma1TLm}
For $p>2$, the operator
$\dis T_{L,m}:\, E_{m+1,p}(\Om)\, \longrightarrow\, E_{m,p}(\Om)$ is bounded
and $\dis T_{L,m}:\, E_{m+1,p}(\Om)\, \longrightarrow\, C^0(\Om)$ is compact.
Furthermore
\[
T_{L,m}\left(E_{m+1,p}(\Om)\right)\, \subset\, C^\alpha(\Om), \quad\text{for}\quad
\alpha =\frac{p-2}{p}.
\]
\end{lemma}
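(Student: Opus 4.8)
The plan is to factor $T_{L,m}$ through the classical Cauchy (Pompeiu) transform
\[
Tg(z) = -\frac{1}{\pi}\int_\Om\frac{g(\zeta)}{\zeta-z}\, d\xi d\eta,
\]
whose mapping properties on $L^p(\Om)$ are standard (see \cite{Beg}). Given $u\in E_{m+1,p}(\Om)$, I would set $\dis g(\zeta)=B(\zeta)\ov{u(\zeta)}/L(\zeta)^{m+1}$. Since $B\in L^\infty(\Om)$ and $\abs{\ov u}=\abs{u}$, one has the pointwise bound $\abs{g}\le\norm{B}_{L^\infty}\,\abs{u}/\abs{L}^{m+1}$, so $g\in L^p(\Om)$ with $\norm{g}_{L^p}\le\norm{B}_{L^\infty}\norm{u}_{m+1,p}$ directly from the definition of the $E_{m+1,p}$ norm. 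With this notation the operator takes the clean form $T_{L,m}u(z)=L(z)^m\,Tg(z)$.

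The next step is to invoke the basic regularity theorem for $T$: for $p>2$ and $g\in L^p(\Om)$ one has $Tg\in C^\alpha(\ov{\Om})$ with $\alpha=(p-2)/p$ and $\norm{Tg}_{C^\alpha}\le C_p\,\norm{g}_{L^p}$. Combined with the previous estimate this gives $\norm{Tg}_{C^\alpha}\le C_p\norm{B}_{L^\infty}\norm{u}_{m+1,p}$. Because $L(z)^m$ is a polynomial, hence smooth and bounded together with its derivatives on the relatively compact set $\ov{\Om}$, the product $L^m\,Tg$ lies in $C^\alpha(\Om)$ and satisfies $\norm{T_{L,m}u}_{C^\alpha}\le C(m,\Om)\norm{Tg}_{C^\alpha}$ by the algebra property of Hölder classes on a bounded domain. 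This establishes the stated inclusion $T_{L,m}\bigl(E_{m+1,p}(\Om)\bigr)\subset C^\alpha(\Om)$.

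For the boundedness $E_{m+1,p}\to E_{m,p}$, the key observation is that dividing by $L^m$ exactly cancels the prefactor: $T_{L,m}u(z)/L(z)^m=Tg(z)$, so $\norm{T_{L,m}u}_{m,p}=\norm{Tg}_{L^p(\Om)}$. As $\ov{\Om}$ has finite measure and $Tg\in C^0$, I would bound $\norm{Tg}_{L^p}\le\abs{\Om}^{1/p}\norm{Tg}_{C^0}\le C\norm{u}_{m+1,p}$, giving the claim. For compactness into $C^0(\Om)$, the estimate of the second paragraph shows that $T_{L,m}$ sends bounded subsets of $E_{m+1,p}$ into bounded subsets of $C^\alpha(\ov{\Om})$, which are uniformly bounded and equicontinuous; by Arzelà–Ascoli they are precompact in $C^0(\ov{\Om})$, so $T_{L,m}:E_{m+1,p}\to C^0(\Om)$ is compact.

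I do not expect a genuine obstacle here: the single nontrivial input is the classical $L^p\to C^\alpha$ estimate for the Cauchy transform, which is available precisely because $p>2$, and everything else is bookkeeping around the factor $L^m$. The one point deserving a line of care is that $g$ really does blow up near the singular points $z_j$ where $L$ vanishes; this is harmless, because the hypothesis $u\in E_{m+1,p}$ is designed exactly so that $g\in L^p(\Om)$ globally, after which the smoothing of $T$ produces a globally Hölder function and the vanishing of $L^m$ at the $z_j$ restores continuity of $T_{L,m}u$ at the singular points.
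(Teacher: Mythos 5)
Your proof is correct, and for the boundedness and compactness it coincides with the paper's argument: the paper also starts from the pointwise bound $\abs{T_{L,m}u(z)}\le C\abs{L(z)}^m\norm{B}_\infty\norm{u}_{m+1,p}$, which is exactly your estimate $\norm{Tg}_{C^0}\le C_p\norm{g}_{L^p}$ applied to $g=B\ov{u}/L^{m+1}$, and then deduces membership in $E_{m,p}(\Om)$ and gets compactness from the compact embedding $C^\alpha\hookrightarrow C^0$. The one place where you genuinely diverge is the H\"older estimate. The paper does not invoke the algebra property of $C^\alpha$ applied to the factorization $L^m\cdot Tg$; instead it estimates the difference $T_{L,m}u(z_1)-T_{L,m}u(z_2)$ directly, writing the combined kernel as $\bigl(\zeta K_1(z_1,z_2)+K_2(z_1,z_2)\bigr)/\bigl((\zeta-z_1)(\zeta-z_2)\bigr)$ with $K_1,K_2$ polynomials divisible by $z_1-z_2$, and then applies H\"older's inequality together with Hadamard's inequality for $\int_\Om\abs{\zeta-z_1}^{-q}\abs{\zeta-z_2}^{-q}\,d\xi d\eta$. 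Your route is more modular: you quote the classical $L^p\to C^\alpha(\ov{\Om})$ mapping property of the Cauchy--Pompeiu transform (Vekua/Begehr) as a black box and only need that $L^m$ is smooth and bounded on $\ov{\Om}$, which makes the H\"older step essentially free; the paper's route is self-contained at the level of Hadamard's inequality and reproves the weighted H\"older estimate by hand. Both yield the same exponent $\alpha=(p-2)/p$ and the same conclusion, so there is no gap in your argument.
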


\begin{proof}
The boundedness of $T_{L,m}$ is a consequence of estimates for the classical Cauchy-Pompeiu operator.
Indeed, for $u\in E_{m+1,p}(\Om)$ with $p>2$, we have
\[\begar{ll}
\dis\abs{T_{L,m}u(z)} &\dis \le \frac{\abs{L(z)}^m}{\pi}
\int_\Om\frac{\abs{B(\zeta)}\, \abs{u(\zeta)}}{\abs{L(\zeta)}^{m+1}\abs{\zeta-z}}\, d\xi d\eta\\ \\
&\dis \le C\abs{L(z)}^m\, \norm{B}_\infty \,\norm{u}_{m+1,p},
\stopar
\]
where $C$ is a constant depending only on $p$ and the size of the domain $\Om$.
From this it also follows that $T_{L,m}u\, \in E_{m,p}(\Om)$.

Now for arbitrary $z_1,\, z_2$ distinct points in $\Om$ , we have
\begin{equation}\label{TLm2}
T_{L,m}u(z_1)-T_{L,m}u(z_2)=
\frac{-1}{\pi}\int_\Om\frac{B(\zeta)\ov{u(\zeta)}}{L(\zeta)^{m+1}}\,
\frac{\zeta K_1(z_1,z_2) +K_2(z_1,z_2)}{(\zeta-z_1)(\zeta-z_2)}\, d\xi d\eta,
\end{equation}
where
\small{
\begin{equation}\label{K1K2}
\begin{gathered}
K_1(z_1,z_2) = L(z_1)^m-L(z_2)^m =(z_1-z_2)P_m(z_1,z_2),\\
K_2(z_1,z_2) = L(z_1)^m(z_1-z_2)-z_1(L(z_1)^m-L(z_2)^m) =(z_1-z_2)Q_m(z_1,z_2),
\end{gathered}
\end{equation}
}
\normalsize
where $P_m$ and $Q_m$ are polynomials in $z_1, z_2$ of  degrees $Nm-1$ and $Nm$, respectively.
It follows from (\ref{TLm2}) and (\ref{K1K2})
\small{
\begin{equation}
\begin{split}
\abs{T_{L,m}u(z_1)-T_{L,m}u(z_2)} &\dis \le
C\norm{B}_\infty \abs{z_1-z_2}\int_\Om\!\!\!\frac{\abs{u(\zeta)}\, d\xi d\eta}{\abs{L(\zeta)}^{m+1}\abs{\zeta-z_1}\abs{\zeta-z_2}}\\
&\le  \dis C\norm{B}_\infty \abs{z_1-z_2}\,\norm{u}_{m+1,p}\left(\int_\Om\frac{ d\xi d\eta}{\abs{\abs{\zeta-z_1}^q\abs{\zeta-z_2}^q}}\right)^{1/q}\\
& \le C'\norm{B}_\infty\,\norm{u}_{m+1,p} \abs{z_1-z_2}^{(p-2)/p},
\end{split}
\end{equation}
}
\normalsize
\noindent where the constants depend only on $p$ and $\Om$ and $q$ . In the last estimate, we used Hadamard's inequality (see \cite{Beg} or \cite{Vek}).
The compactness of $T_{L,m}$ follows from the compactness of embedding into H\"{o}lder spaces.

\end{proof}

Next we define the adjoint of $T_{L,m}$  with respect to the real bilinear form $\langle .,.\rangle$ given by
by
\[
\langle \phi,\psi\rangle =\text{Re}(\phi,\psi)=\text{Re}\left(\int_\Om \phi(z)\ov{\psi(z)}\, dx dy\right)\, .
\]
If $q$ is the H\"{o}lder conjugate of $p$, we set
\[
T^\ast_{L,m}:\, X_{m,q}(\Om)\, \longrightarrow\, X_{m+1,q}(\Om),
\]
defined in the space
\[
X_{m,q}(\Om)=\{ v:\, \Om\,\longrightarrow\, C:\  L(z)^mv(z)\in L^q(\Om)\}\, ,
\]
 and given by
\begin{equation}\label{Tconjugate}
T^\ast_{L,m}v(\zeta )=\frac{-B(\zeta)}{\pi L(\zeta)^{m+1}}\int_\Om\!\!\frac{L(z)^m\ov{v(z)}}{z-\zeta}\, dx dy\, .
\end{equation}

\begin{remark}
The fact that we can take $X_{m+1,q}(\Om)$ as the target space is a consequence of Theorem 1.2.6 of
\cite{Vek}.
\end{remark}

Now consider the operator
\[
P_{L,m}\,: E_{m+1,p}(\Om)\,\longrightarrow\, E_{m,p}(\Om),\ \ P_{L,m}u:= u- T_{L,m}u,
\]
and its adjoint
\[
P^\ast_{L,m}\,: X_{m,q}(\Om)\,\longrightarrow\, X_{m+1,q}(\Om),\ \ P^\ast_{L,m}v:= v- T^\ast_{L,m}v.
\]
We have the following lemmas

\begin{lemma}\label{KerPLm}
The spaces $\textrm{Ker}(P_{L,m})$ and $\textrm{Ker}(P^\ast_{L,m})$ are finite dimensional.
Consequently, the operators $P_{L,m}$ and $P^\ast_{L,m}$ are Fredholm.
\end{lemma}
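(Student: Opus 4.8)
The plan is to treat both $P_{L,m}=I-T_{L,m}$ and $P^\ast_{L,m}=I-T^\ast_{L,m}$ as perturbations of the identity by compact operators and then read off the finite dimensionality of the kernels, and the Fredholm property, from the Riesz--Schauder theory of compact operators, using the real bilinear form $\langle\cdot,\cdot\rangle$ to identify the cokernel of $P_{L,m}$ with $\mathrm{Ker}(P^\ast_{L,m})$.

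First I would upgrade the compactness in Lemma \ref{lemma1TLm}. The identity $T_{L,m}u/L^m=\mathcal{P}\big[B\ov{u}/L^{m+1}\big]$, where $\mathcal{P}$ denotes the Cauchy--Pompeiu operator, exhibits $T_{L,m}$ as the composition of the bounded (conjugate-linear) map $E_{m+1,p}(\Om)\ni u\mapsto B\ov{u}/L^{m+1}\in L^p(\Om)$ with $\mathcal{P}\colon L^p(\Om)\to C^\alpha(\ov\Om)$ and the compact embeddings $C^\alpha(\ov\Om)\hookrightarrow C^0(\ov\Om)\hookrightarrow L^p(\Om)$. Since the $E_{m,p}$ norm of $T_{L,m}u$ is exactly the $L^p$ norm of $T_{L,m}u/L^m$, this shows $T_{L,m}\colon E_{m+1,p}(\Om)\to E_{m,p}(\Om)$ is compact, strengthening Lemma \ref{lemma1TLm}. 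By the remark following \eqref{Tconjugate} the adjoint $T^\ast_{L,m}$ has an entirely analogous Cauchy-transform factorization on $X_{m,q}(\Om)$ (this is where Theorem 1.2.6 of \cite{Vek} enters), so the same argument gives compactness of $T^\ast_{L,m}\colon X_{m,q}(\Om)\to X_{m+1,q}(\Om)$.

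Next I would prove $\dim\mathrm{Ker}(P_{L,m})<\infty$ by Riesz's criterion: it suffices to show that the closed unit ball of $K:=\mathrm{Ker}(P_{L,m})$, in the $\norm{\cdot}_{m+1,p}$ norm, is compact. Given a bounded sequence $(u_n)\subset K$ we have $u_n=T_{L,m}u_n$, so by Lemma \ref{lemma1TLm} the $u_n$ are uniformly bounded in $C^\alpha(\ov\Om)$ and a subsequence converges in $C^0(\ov\Om)$. The point that requires care is to promote this to convergence in the stronger weighted norm $\norm{\cdot}_{m+1,p}$: here I would use that membership $u_n\in E_{m+1,p}(\Om)$ forces $T_{L,m}u_n=L^mI_n$ with $I_n(z_j)=0$ at every singular point, and then combine this vanishing with the uniform H\"older bound and the local behavior of $L$ to obtain a uniform-integrability estimate for $u_n/L^{m+1}=I_n/L$ near $S$. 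This yields $L^p$ convergence of $u_n/L^{m+1}$, hence $\norm{\cdot}_{m+1,p}$-convergence, and finite dimensionality follows. The same argument applied to $T^\ast_{L,m}$ on $X_{m,q}(\Om)$ gives $\dim\mathrm{Ker}(P^\ast_{L,m})<\infty$.

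Finally, Fredholmness is the Fredholm alternative for an identity-minus-compact operator: once $T_{L,m}$ (resp.\ $T^\ast_{L,m}$) is compact, $P_{L,m}$ (resp.\ $P^\ast_{L,m}$) has finite-dimensional kernel and closed range, and the real bilinear form identifies $\mathrm{coker}(P_{L,m})$ with $\mathrm{Ker}(P^\ast_{L,m})$, which is finite dimensional. I expect the main obstacle to be the degeneracy of the weight $L$ at the points of $S$, which separates the two weighted spaces $E_{m+1,p}(\Om)$ and $E_{m,p}(\Om)$ on which $I$ and $T_{L,m}$ act and which, at the borderline exponent $\alpha=(p-2)/p$, makes the uniform-integrability step near the singular points the crux of the argument; controlling it rests on the forced vanishing of $T_{L,m}u$ at each $z_j$ together with the sharp local estimates for the Cauchy transform.
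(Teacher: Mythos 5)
Your overall strategy --- kernel elements are fixed points of the compact operator $T_{L,m}$, hence the kernel cannot be infinite dimensional --- is the same as the paper's, but your execution has a genuine gap at exactly the point you flag as the crux. To apply Riesz's criterion you must show that the unit ball of $\mathrm{Ker}(P_{L,m})$ is compact in the $\norm{\cdot}_{m+1,p}$ norm, and your route to this is a uniform-integrability estimate for $u_n/L^{m+1}=I_n/L$ near each $z_j$, derived from the forced vanishing $I_n(z_j)=0$ together with the uniform $C^\alpha$ bound, $\alpha=(p-2)/p$. Those two facts only give $|I_n(z)/L(z)|\le C\,|z-z_j|^{\alpha-1}$ near $z_j$, and since $(\alpha-1)p=-2$, the resulting majorant $|z-z_j|^{-2}$ is exactly non-integrable in the plane. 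So no uniform integrability, and hence no $L^p$ convergence of $u_n/L^{m+1}$, follows from the ingredients you list: the borderline exponent is not a technicality to be controlled later, it is where the argument as stated stops. The paper sidesteps the weighted norm entirely: a fixed point of $T_{L,m}$ lies in $C^\alpha(\Om)\subset L^2(\Om)$, so an infinite-dimensional kernel would contain an $L^2$-orthonormal sequence $\{u_j\}$, while compactness of $T_{L,m}$ into $C^0(\Om)$ forces a subsequence of $u_j=T_{L,m}u_j$ to converge uniformly, contradicting $\norm{u_j-u_k}_2=\sqrt{2}$; for $\mathrm{Ker}(P^\ast_{L,m})$ it first upgrades $v=T^\ast_{L,m}v$ to $v\in X_{m+1,2}(\Om)$ via Theorem 1.26 of Vekua and repeats the argument. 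If you want to keep the Riesz unit-ball approach you need a strictly sharper local estimate than H\"older continuity at the critical exponent, which you have not supplied.

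A secondary issue: your Fredholm step invokes the Riesz--Schauder alternative for ``identity minus compact,'' but $P_{L,m}$ maps $E_{m+1,p}(\Om)$ into the strictly larger space $E_{m,p}(\Om)$, so the ``identity'' part is really the inclusion $E_{m+1,p}(\Om)\hookrightarrow E_{m,p}(\Om)$, which is injective with dense, non-closed range and is therefore not Fredholm; the standard theory for compact perturbations of the identity on a single Banach space does not apply verbatim. The paper is admittedly also brief on this point, but what it actually uses downstream is not abstract Fredholmness: the codimension statement is proved separately in Lemma \ref{EmpDecomposition} by a direct duality argument identifying $\mathrm{Range}(P_{L,m})^\perp$ with $\mathrm{Ker}(P^\ast_{L,m})$, which is the part of the alternative that survives in this two-space setting. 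Your first step (upgrading Lemma \ref{lemma1TLm} to compactness of $T_{L,m}\colon E_{m+1,p}(\Om)\to E_{m,p}(\Om)$ by factoring through the Cauchy--Pompeiu operator and the compact embedding of $C^\alpha$) is correct and is a useful observation, but it does not by itself repair either of the two gaps above.
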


\begin{proof}
First note that for $u\in \textrm{Ker}(P_{L,m})$  we have $u=T_{L,m}u$ and since $p>2$, then
$\dis u\in C^\alpha (\Om)\subset L^2(\Om)$ (where $\alpha =(p-2)/p$). We continue the proof
by contradiction, suppose that $\textrm{dim}\left(\textrm{Ker}(P_{L,m})\right)=\infty$,
then we can find an $L^2$-orthonormal basis $\{u_j\}_{j \in \N}$. It follows from
the compactness of $T_{L,m}$ and from the fact that $u_j=T_{L,m}u_j$ that $\{u_j\}_{j \in \N}$ has a convergent subsequence.
This is absurd, since $\norm{u_j-u_k}_2=\sqrt{2}$ for $j\ne k$.

Now consider $v\in \textrm{Ker}(P^\ast_{L,m})$; then
\begin{equation}\label{vinKerPstar}
v(\zeta)=T^\ast_{L,m}v(\zeta)= -
\frac{B(\zeta)}{\pi L(\zeta)^{m+1}}\int_\Om\!\!\frac{L(z)^m\ov{v(z)}}{z-\zeta}\, dx dy\, .
\end{equation}
Since $L(z)^m\ov{v(z)}\,\in L^q(\Om)$ (with $q>1$),  it follows (Theorem 1.26 of \cite{Vek}) that
$v\in X_{m+1,2}(\Om)$. With this, a similar argument as the one used for $\textrm{Ker}(P_{L,m})$
shows that $\textrm{Ker}(P^\ast_{L,m})$ is also finite dimensional.
\end{proof}

\begin{lemma}\label{EmpDecomposition}
Let $H_{m,p}(\Om)=E_{m,p}(\Om)\cap\mathcal{H}(\Om)$, where $\mathcal{H}(\Om)$ denotes the
space of holomorphic functions in $\Om$. Then
\[
E_{m,p}(\Om)=\textrm{Range}(P_{L,m})\, +\, H_{m,p}(\Om).
\]
\end{lemma}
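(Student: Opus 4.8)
The plan is to turn the statement into a question about the cokernel of $P_{L,m}$ and then into a non-degeneracy statement for the pairing $\langle\cdot,\cdot\rangle$ against holomorphic functions. Since $T_{L,m}$ is compact (Lemma~\ref{lemma1TLm}) and $P_{L,m}=\iota-T_{L,m}$ is Fredholm (Lemma~\ref{KerPLm}), its range is closed and, by the Fredholm alternative for the real bilinear form $\langle\cdot,\cdot\rangle$, equals the annihilator of $\textrm{Ker}(P^\ast_{L,m})$, i.e. $g\in\textrm{Range}(P_{L,m})$ iff $\langle g,v\rangle=0$ for every $v\in\textrm{Ker}(P^\ast_{L,m})$. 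Thus the decomposition $E_{m,p}(\Om)=\textrm{Range}(P_{L,m})+H_{m,p}(\Om)$ holds precisely when, for each $g$, one can pick $h\in H_{m,p}(\Om)$ with $\langle g-h,v\rangle=0$ for all $v\in\textrm{Ker}(P^\ast_{L,m})$. Because $\textrm{Ker}(P^\ast_{L,m})$ is finite dimensional, this is solvable for every $g$ exactly when the pairing $H_{m,p}(\Om)\times\textrm{Ker}(P^\ast_{L,m})\to\R$ is non-degenerate on the second factor. So the whole lemma reduces to the single claim: if $v\in\textrm{Ker}(P^\ast_{L,m})$ satisfies $\langle h,v\rangle=0$ for all $h\in H_{m,p}(\Om)$, then $v=0$.

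To exploit this, I would first identify $H_{m,p}(\Om)$. Since $p>2$, a holomorphic $h$ lies in $E_{m,p}(\Om)$ iff $h/L^m\in L^p(\Om)$, which (the only constraint being integrability near the $z_j$) forces $h$ to vanish to order $\ge m$ at each $z_j$; hence $h=L^m H$ with $H$ holomorphic and $H\in L^p(\Om)$, and in particular $z^kL^m\in H_{m,p}(\Om)$ for every $k\ge 0$. Using that $ih\in H_{m,p}(\Om)$ whenever $h\in H_{m,p}(\Om)$, the real condition $\langle h,v\rangle=0$ upgrades to $\int_\Om h\,\ov v\,dxdy=0$, and testing against $h=z^kL^m$ yields the moment conditions $\int_\Om z^k L^m\ov v\,dxdy=0$ for all $k\ge 0$. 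Now consider the Cauchy transform $\Psi(\zeta)=-\frac1\pi\int_\Om\frac{L(z)^m\ov{v(z)}}{z-\zeta}\,dxdy$, which by \eqref{vinKerPstar} satisfies $v=\frac{B}{L^{m+1}}\Psi$ on $\Om$ and is holomorphic off $\ov\Om$. The moment conditions annihilate every coefficient of its expansion at infinity, so $\Psi\equiv 0$ on the unbounded component of $\C\setminus\ov\Om$.

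It remains to push this vanishing into $\Om$. On $\Om$ the Pompeiu formula gives $\dfrac{\pa\Psi}{\pa\ov\zeta}=L^m\ov v=\dfrac{L^m\ov B}{\ov L^{m+1}}\,\ov\Psi$, a Vekua equation whose coefficient has modulus $|B|/|L|$, bounded on compact subsets of $\Om\backslash S$. Away from $S$ the function $v$ lies in $X_{m+1,2}(\Om)$ (established in the proof of Lemma~\ref{KerPLm}) and $1/L$ is bounded, so a Sobolev bootstrap makes $\Psi$ continuous across any boundary arc avoiding $S$; on a disc $D$ straddling $\pa\Om$ and disjoint from $S$ the coefficient is in $L^\infty(D)\subset L^r(D)$ with $r>2$, and the similarity principle writes $\Psi=\ei{\varphi}F$ with $F$ holomorphic. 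Since $\Psi=0$ on the exterior part of $D$, $F$ vanishes on an open set, so $F\equiv 0$ and $\Psi\equiv 0$ on the interior open set $D\cap\Om$. Because $\Om\backslash S$ is connected, the local similarity principle then propagates the vanishing: the interior of $\{\Psi=0\}$ is open, nonempty and relatively closed in $\Om\backslash S$, hence equals $\Om\backslash S$. Therefore $v=\frac{B}{L^{m+1}}\Psi=0$ almost everywhere, since $S$ is negligible, which is the required non-degeneracy.

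The main obstacle is precisely this last step, transferring the exterior vanishing of $\Psi$ across $\pa\Om$ and through the punctured domain $\Om\backslash S$. Since $v$ is only known to lie in $X_{m+1,2}(\Om)$ and the Vekua coefficient blows up like $1/|L|$ at the singular points, one must carefully (i) upgrade the regularity of $\Psi$ near the boundary so that the similarity principle applies, and (ii) verify that the finitely many singular points neither break the connectedness of $\Om\backslash S$ nor obstruct the unique-continuation chain. By contrast, the Fredholm-duality reduction and the moment computation are formal and should present no difficulty.
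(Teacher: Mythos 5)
Your proposal is correct and follows the same skeleton as the paper's proof: reduce via the Fredholm alternative to showing that any $v\in \textrm{Ker}(P^\ast_{L,m})$ annihilating $H_{m,p}(\Om)$ vanishes, use holomorphic test functions to kill the Cauchy transform $\Psi(\zeta)=-\frac1\pi\int_\Om\frac{L(z)^m\ov{v(z)}}{z-\zeta}\,dxdy$ outside $\Om$, and finish by a similarity-principle/unique-continuation argument. The two executions differ in instructive ways. First, you test against the monomials $z^kL^m$ and obtain moment conditions, which only force $\Psi\equiv 0$ on the \emph{unbounded} component of $\C\backslash\ov{\Om}$; the paper instead tests against $h_\zeta(z)=L(z)^m/(z-\zeta)$ for each $\zeta\notin\ov{\Om}$, which gives $T^\ast_{L,m}v(\zeta)=0$ on all of $\C\backslash\ov{\Om}$ at once. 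Your weaker conclusion still suffices because you propagate the vanishing into the connected set $\Om\backslash S$ from the outer boundary, but the paper's choice of test functions is the more economical one. Second, and more substantially, the paper differentiates the identity $v=T^\ast_{L,m}v$ to get a Vekua equation for $v$ itself whose coefficient involves $B_{\ov{\zeta}}/B$; this forces the paper to excise the zero set $Z=\{B=0\}$, argue separately that $v=0$ on $Z$ and on $\pa\Om_1$, and apply the similarity principle only on $\Om_1=\Om\backslash Z$. You instead run the argument on $\Psi$, whose equation $\pa\Psi/\pa\ov{\zeta}=\bigl(L^m\ov{B}/\ov{L}^{m+1}\bigr)\ov{\Psi}$ has a coefficient of modulus $|B|/|L|$, hence locally bounded away from $S$ with no division by $B$; this cleanly avoids the zero-set detour and recovers $v=B\Psi/L^{m+1}=0$ at the end. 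Both versions lean on the same unstated regularity bootstrap (making $\Psi$, respectively $v$, regular enough across $\pa\Om$ for the similarity principle), which you correctly flag as the only genuinely delicate point; neither write-up is more rigorous than the other there.
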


\begin{proof}
To prove this result, it is enough to verify that if $v\in X_{m,q}(\Om)$ is such that
$v\in \textrm{Range}(P_{L,m})^\perp \cap H_{m,p}(\Om)^\perp$, then $v=0$.
Since
\begin{equation*}
\textrm{Range}(P_{L,m})^\perp =\textrm{Ker}(P^\ast_{L,m}),
\end{equation*}
such a function $v$
satisfies (\ref{vinKerPstar}). Also it follows from $L(z)^m\ov{v(z)}\in L^q(\Om)$, with
$1<q<2$, that
\[
\int_\Om\frac{L(z)^m\ov{v(z)}}{z-\zeta}dx dy \, \in L^\gamma(\Om),
\]
for any $\dis q<\gamma<\frac{2q}{2-q}$ (see \cite{Beg} or \cite{Vek} for properties
of the Cauchy-Pompeiu operator).
By repeating this argument, we find that in fact $v$ is H\"{o}lder continuous in
$\Om\backslash S$.

The function $\dis h_\zeta(z)=\frac{L(z)^m}{z-\zeta}$ is holomorphic in $\Om$ for any
$\zeta\notin\Om$. Moreover, since $h_\zeta$ vanishes to order $m$ on the set $S$,
then  $h_\zeta \in E_{m,p}(\Om)$ for $\zeta\notin\ov{\Om}$. Thus $h_\zeta\in H_{m,p}(\Om)$
for $\zeta\notin\ov{\Om}$. It follows from $v\in H_{m,p}(\Om)^\perp$ that
\[
\langle h_\zeta ,v \rangle=\textrm{Re}\left(\int_\Om\!\! \frac{L(z)^m\ov{v(z)}}{z-\zeta}\, dx dy\right) =0.
\]
Similarly $\langle ih_\zeta ,v \rangle=0$. This means that $(h_\zeta ,v)=0$, which implies that
$T^\ast_{L,m}v(\zeta)=0$ for every $\zeta\notin\ov{\Om}$.
Hence $v(\zeta)$ is extends continuously to $\C\backslash S$ by taking it to be zero outside $\Om$.
In particular $v=0$ on the boundary $\pa\Om$.

Let $Z=\{ z\in\Om: \ B(z)=0\}$ and $\Om_1=\Om\backslash Z$.  It follows from
(\ref{vinKerPstar}) that $v=0$ on the closed set $Z$. Since $v=0$ on $\pa\Om$, then
$v=0$ on $\pa\Om_1$. By differentiating  (\ref{vinKerPstar}) with respect to $\ov{\zeta}$, we
get
\[
\frac{\pa v}{\pa \ov{\zeta}}=\frac{1}{B(\zeta)}\frac{\pa B(\zeta)}{\pa\ov{\zeta}}\, v -
\frac{B(\zeta)}{L(\zeta)}\, \ov{v}
\]
in $\Om_1\backslash S$. This equation is elliptic and it follows that locally $v$ is similar to
a holomorphic function. Since $v=0$ on $\pa\Om_1$, then $v=0$ everywhere. This completes the proof
of the lemma.
\end{proof}

\section{Proof of Theorem \ref{Theo1}}

Let $\{w_1,\,\cdots ,\, w_n\}\,\subset\, X_{m,q}(\Om)$ be a basis of $\textrm{Ker}(P^\ast_{L,m})$.
It follows from Lemma \ref{EmpDecomposition} and from the fact that the operator $P_{L,m}$ is
Fredholm that we can find $\{h_1,\,\cdots ,\, h_n\}\, \subset\, H_{m,p}(\Om)$ such that
$\langle w_j,h_k \rangle=\delta_{jk}$, where $\delta_{jk}$ is the Kronecker symbol, and such that
\begin{equation}\label{EmpDecomposition2}
E_{m,p}(\Om)=\textrm{Range}(P_{L,m})  + \textrm{Span}\{h_1,\,\cdots ,\, h_n \}\, .
\end{equation}
Now given $F\in E_{m,p}(\Om)$, define
\begin{equation}\label{functionf}
f(z)=\frac{-L(z)^m}{\pi}\int_\Om\!\! \frac{F(\zeta)}{L(\zeta)^m (\zeta -z)}\, d\xi d\eta\, .
\end{equation}
Then $f\in E_{m,p}(\Om)$ and $\dis\frac{\pa f}{\pa\ov{z}}=F$.  The decomposition
(\ref{EmpDecomposition2}) implies the existence of real constants $c_1,\, \cdots ,\, c_n$
such that
\[
f=g+\sum_{k=1}^nc_k h_k\, ,
\]
with $g\in \textrm{Range}(P_{L,m})$. Let $u\in E_{m+1,p}(\Om)$ such that
$P_{L,m}u=g$. Thus,
\[
P_{L,m}u=u-T_{L,m} u= g=f-\sum_{k=1}^nc_kh_k.
\]
It follows from the definition of the operator $T_{L,m}$ in (\ref{TLm1}) and from
the fact that $h_1,\, \cdots ,\, h_n$ are holomorphic that
\[
\frac{\pa u}{\pa\ov{z}} =\frac{\pa}{\pa\ov{z}}\left(  f-\sum_{k=1}^nc_kh_k +T_{L,m}u\right)
=F(z)+\frac{B(z)}{L(z)}\, \ov{u}.
\]
Therefore the function $u$ solves (\ref{CRequation3}). This completes the
proof of the Theorem \ref{Theo1}.

\begin{remark}\label{vanishingorder}
It follows from the definition of the function $f$ given in (\ref{functionf}) that if the function
$F$ vanishes to infinite order at the singular points $z_1,\,\cdots ,\, z_N$, then the solution
$u$ can be taken to vanish to any prescribed order at the points $z_j$. For it is enough to
take $m$ larger than the prescribed order of vanishing.
\end{remark}

\section{Proof of Theorem \ref{Theo2}}

To show the existence of nontrivial solutions in $\Om$ of the homogeneous equation
\begin{equation}\label{CRhomogeneous}
\frac{\pa u}{\pa\ov{z}}=\frac{B(z)}{L(z)}\,\ov{u}\, ,
\end{equation}
we first recall results from \cite{Mez1} and \cite{Mez2} dealing with particular
cases of (\ref{CRhomogeneous}).

Let $q(\ta)$ be a $2\pi$-periodic and $C^\infty$ function. It is proved in \cite{Mez1}
that there exist a sequence of positive numbers
\[
0<\lambda_1<\lambda_2<\,\cdots \,<\lambda_n<\,\cdots\quad\textrm{with}
\quad \lim_{n\to\infty}\lambda_n=\infty
\]
and a sequence of nonvanishing  $2\pi$-periodic and $C^\infty$ functions $f_k(\ta)$ such
for every $k\in \Z^+$, the function $v_k(r,\ta)=r^{\lambda_k}f_k(\ta)$ solves the equation
\begin{equation}\label{CRhomogeneousA}
\frac{\pa v}{\pa\ov{z}}=\frac{q(\ta)}{r}\,\ov{v}\,  \ \text{where} \ z=r\ei{i\ta}\, .
\end{equation}

Let $R>0$ and $g\in C^\infty(D(0,R)\backslash\{0\})$ such that $g=O(r^\alpha)$ for some
$0<\alpha <1$. Then the proof of Theorem (4.1) of \cite{Mez2} shows that every solution of
the equation
\begin{equation}\label{CRhomogeneousB}
\frac{\pa v}{\pa\ov{z}}=\frac{q(\ta)+g(r,\ta)}{r}\,\ov{v}
\end{equation}
is similar to a solution of (\ref{CRhomogeneousA}) and vice versa.
This means that for every $v_k(r,\ta)$ solution of (\ref{CRhomogeneousA})
there exists a bounded function $s_k(r,\ta)$ in the disc $D(0,R)$ such that the function
$w_k=v_k\ei{s_k}$ solves (\ref{CRhomogeneousB}).

Now we turn back to equation (\ref{CRhomogeneous}) in the domain $\Om$.
It follows from hypothesis \eqref{AB-condition} that the function $B$ is of
the form $q(\ta)+g(r,\ta)$ with $g=O(r^{\tau_j})$ in a neighborhood of the singular
point $z_j$ (here the polar coordinates are centered at $z_j$). It follows
that equation (\ref{CRhomogeneous}) has a nontrivial solution $v_j$ defined in a neighborhood
of $z_j$ and moreover, $v_j$ can be chosen to vanish to any prescribed order at $z_j$. Thus
such a function is class $C^k$ near $z_j$ (provided that the order of vanishing at $z_j$ is
large enough).

Let $\ep >0$ be such that $D(z_j,2\ep)\subset\Om$ for every $j=1,\cdots, N$ and the discs are pairwise disjoint.  For any fixed $k \in \N$ and each $j$,  let
$\phi_j\in C^{k}(\C)$  with $\textrm{Supp}(\phi_j)\subset D(z_j,2\ep)$ and
$\phi_j\equiv 1$ in the disc $D(z_j,\ep)$. We assume that $\ep$ is small enough so that the
functions $v_j$ (described above) are defined in the disc $D(z_j,2\ep)$.
We are going to construct a solution $u$ of (\ref{CRhomogeneous}) in the whole domain $\Om$
of the form
\begin{equation}\label{uw}
u(z)=w(z)+\sum_{j=1}^N\phi_j(z)v_j(z)\, .
\end{equation}

In order for the function $u$ to satisfy (\ref{CRhomogeneous}), the function $w$ needs to
solve
\begin{equation}\label{CRforw}
\frac{\pa w}{\pa\ov{z}}=\frac{B(z)}{L(z)}\,\ov{w} +F(z)\, ,
\end{equation}
with
\begin{equation}\label{functionF}
F(z)=\sum_{j=1}^N\left(\frac{B(z)}{L(z)}\ov{\phi_j(z)}\ov{v_j(z)} -
\frac{\pa(\phi_j(z)v_j(z))}{\pa\ov{z}}\right)\, .
\end{equation}
Note that $F\equiv 0$ in the discs $D(z_j,\ep)$ for $j=1,\cdots, N$.
It follows from Theorem \ref{Theo1} that equation (\ref{CRforw}) has a solution
and that  $w\in C^{k+1}(\Om\backslash S)$ and that $w$ can be chosen to vanish to any prescribed order
at the singular points (Remark \ref{vanishingorder}).

Now we need to verify that for some set of cut off functions $\{\phi_1,\, \cdots ,\, \phi_N\}$
the constructed solution $u$ is not trivial.
By contradiction, suppose that for every $\{\phi_1,\, \cdots ,\, \phi_N\}$, $u\equiv 0$.
The corresponding function $w$ satisfies
\[\begar{lll}
w & =-\dis\sum_{j=1}^Nv_j &\quad\textrm{in}\ \ \bigcup_{j=1}^ND(z_j,\ep),\\
w & =0 &\quad\textrm{in}\ \ \Om_1=\Om\backslash\left(\bigcup_{j=1}^ND(z_j,\ep)\right)\, .
\stopar\]
Let $u'(z)=w'(z)+\sum_{j=1}^N\phi'_j(z)v_j(z) \equiv 0$ be another such solution corresponding to another
set $\{\phi'_1,\, \cdots ,\, \phi'_N\}$ of cut off functions. Consider
\begin{equation*}
\text{$\phi'_2=\phi_2,\, \cdots ,\, \phi'_N=\phi_N$ and $\phi'_1\ne \phi_1$.}
\end{equation*}

Set $\psi=\phi'_1-\phi_1$ and $W=w'-w$. Hence
$W\equiv 0$ everywhere except possibly on the annulus $A_\ep =D(z_1,2\ep)\backslash D(z_1,\ep)$
and it satisfies the equation
\begin{equation}\label{Wequation}
\frac{\pa W}{\pa\ov{z}} =\frac{B(z)}{L(z)}\,\ov{W} +G_\psi(z),
\end{equation}
where
\begin{equation}\label{functionG}
G_\psi (z)=\ov{\psi}(z)\,\frac{\pa v_1(z)}{\pa\ov{z}}
-\frac{\pa (\psi(z) v_1(z))}{\pa\ov{z}}=
\ov{\psi}(z)\,\frac{B(z)\ov{ v_1(z)}}{L(z)}
-\frac{\pa (\psi(z) v_1(z))}{\pa\ov{z}}.
\end{equation}

Let $p_0\in A_\ep$ such that $\dis B(p_0)\ne 0$ and $v_1(p_0)\ne 0$, and so $\dis\frac{\pa v_1}{\pa\ov{z}}(p_0)\ne 0$,
and let $\delta>0$ small enough so that $\dis\frac{\pa v_1}{\pa\ov{z}}(z)\ne 0$ for all $z\in D(p_0,\delta)$
(the same is again true for $B$ and $v_1$).
We can assume after translation that $p_0=0$. In particular, we are in a situation where for any function
\begin{equation*}
\psi =\phi'_1-\phi_1\, \in C^{k}(\ov{D(0,\delta)}) \ \text{with $\textrm{Supp}(\psi)\subset\ov{D(0,\delta)}$},
\end{equation*}
 any solution of the equation (\ref{Wequation})
satisfies $W(z)=0$ for all
$z\in\C$ with $|z|\ge\delta$. We are going to show that such a situation cannot happen.

We use results from \cite{Vek}, Chapter III, Sections 8 and 10,  dealing with representations
of generalized analytic functions. In our case, we apply such representation to the solutions of equation
 (\ref{Wequation}) in the disc $\ov{D(0,\delta)}$. Thus, there exist kernels $K_1(\zeta,z)$ and
 $K_2(\zeta,z)$ (formulas 8.16, page 168 of \cite{Vek}) depending only on the coefficient $B/L$ in the disc
 such that
 \begin{equation}\label{kerK1K2}
 K_1(\zeta,z)=\frac{1}{\zeta -z} +C_1(\zeta,z)\quad\textrm{and}\quad K_2(\zeta,z)= C_2(\zeta,z),
 \end{equation}
 where $C_1,\, C_2$ are $C^\infty$ functions (because $B/L$ is $C^\infty$ in the annulus $A_\ep$)
 such that any solution $W$ of (\ref{Wequation}) has the representation
 \begin{equation}\label{Wrepresentation}
W(z)=\frac{-1}{\pi}\int_{D(0,\delta)}\!\!
\left(K_1(\zeta,z)G_\psi(\zeta)+K_2(\zeta,z)\ov{G_\psi(\zeta)}\right)\, d\xi d\eta \, .
\end{equation}
Note that  the original formula of Vekua contains an additional term involving an integral over the
boundary of the domain. But in our case the additional term is 0 because $W=0$ outside $D(0,\delta)$.
It follows from (\ref{Wrepresentation}) that
\begin{equation}\label{Integral=0}
\int_{D(0,\delta)}\!\!\left(K_1(\zeta,z)G_\psi(\zeta)+K_2(\zeta,z)\ov{G_\psi(\zeta)}\right)\, d\xi d\eta =0\quad
\forall z\notin D(0,\delta)\, ,
\end{equation}
and this relation holds for any function $\psi$ as above. We use (\ref{kerK1K2}) to rewrite (\ref{Integral=0}) as
\begin{equation}\label{Integral2}
\int_{D(0,\delta)}\!\!\frac{G_\psi(\zeta)d\xi d\eta}{z-\zeta}=
\int_{D(0,\delta)}\!\!\left(C_1(\zeta,z)G_\psi(\zeta)+C_2(\zeta,z)\ov{G_\psi(\zeta)}\right)\, d\xi d\eta \, ,
\end{equation}
for $|z|\ge \delta$. By  using the expression of $G_\psi$
given in (\ref{functionG}) we have
\begin{equation*}
\int_{D(0,\delta)}\!\!\frac{G_\psi(\zeta)d\xi d\eta}{z-\zeta}= - \int_{D(0,\delta)}\!\!
\frac{\ov{\psi}(\zeta)\,B(\zeta)\ov{ v_1(\zeta)}}{L(\zeta)(z-\zeta)}\, d\xi d\eta\, .
\end{equation*}
Now we select the function $\psi$ as
\[
\psi(\zeta)=(\delta^2-r^2)^{2k}\frac{\ov{L(\zeta)}}{\ov{B(\zeta)}\, v_1(\zeta)}\quad\textrm{for}
\ \zeta=r\ei{i\ta}\, \in D(0,\delta)\, .
\]
It follows that
\[
G_\psi(\zeta)=(\delta^2-r^2)^{2k}-\frac{\pa}{\pa\ov{\zeta}}\left({(\delta^2-r^2)^{2k}}\frac{\ov{L(\zeta)}}{\ov{B(\zeta)}}\right)
\]
and so
\begin{equation}\label{Gestimate}
\abs{G_\psi(\zeta)}\le C_{k}\delta^{4k-1},\quad \forall \zeta\in D(0,\delta),
\end{equation}
where $C_{k}$ is a constant which does not depend on $\delta$.
For such a choice of the function $\psi$ the relation \eqref{Integral2} becomes
\begin{equation}\label{Integral3}
\int_{D(0,\delta)}\!\!\frac{(\delta^2-r^2)^{2k}}{z-\zeta}\, d\xi d\eta=
\int_{D(0,\delta)}\!\!\left(C_1(\zeta,z)G_\psi(\zeta)+C_2(\zeta,z)\ov{G_\psi(\zeta)}\right)\, d\xi d\eta \, ,
\end{equation}
for $|z|\ge\delta$.
We evaluate the left hand side of (\ref{Integral3}) by using the series expansion
$\dis\frac{1}{z-\zeta}=\sum_{j=0}^\infty\frac{\zeta^j}{z^{j+1}}$, for $|z|>|\zeta|$, and by using polar coordinates to integrate. 
We find
\[
\frac{\pi \delta^{4k + 2}}{(2k+1)z}=
\int_{D(0,\delta)}\!\!\left(C_1(\zeta,z)G_\psi(\zeta)+C_2(\zeta,z)\ov{G_\psi(\zeta)}\right)\, d\xi d\eta \, ,
\]
for $|z|>\delta$.  By differentiating  with respect to $z$ and using \eqref{Gestimate}, we find the estimate
\begin{align*}
\frac{\pi \delta^{4k + 2}}{(2k+1)|z|^{2}}\ &\le
\int_{D(0,\delta)}\!\!\left(\left|{\frac{\pa C_1(\zeta,z)}{\pa z}}\right|
+\left|\frac{\pa C_2(\zeta,z)}{\pa z}\right|\right)\, \abs{G_\psi(\zeta)}
\, d\xi d\eta \,  \\
&\le C_{k} \delta^{4k-1} \int_{D(0,\delta)}\!\!\left(\left|{\frac{\pa C_1(\zeta,z)}{\pa z}}\right|
+\left|\frac{\pa C_2(\zeta,z)}{\pa z}\right|\right)\, 
\, d\xi d\eta \,.
\end{align*}

Since the functions inside the integral are bounded, by possibly increasing $C_{k}$ we have
\[
\frac{\pi \delta^{4k + 2}}{(2k+1)|z|^{2}}\le \pi C_{k} \delta^{4k+1},
\]
where $C_{k}$ does not depend on $\delta$ when it is small. 
If we take $|z| =  \displaystyle\frac{3 \delta}{2}$, it follows that
\begin{equation*} 
\frac{\pi \delta^{4k + 2}}{(2k+1)}\le \pi C_{k} \delta^{4k+3} \ \ \Rightarrow \ 1 \leq (2k+1) C_{k} \delta. 
\end{equation*}
This is clearly a contradiction if we reduce $\delta$ sufficiently, which completes the proof of the theorem.

\vskip 0.1in

\noindent{\bf Acknowledgements.} Part of this work was done when the first author was visiting the Department of Mathematics \& Statistics at FIU (Florida International
University). He would like to thank the members of the institution for the support provided during his visit.


\begin{thebibliography}{ABCD}
\bibitem{Beg} H. Begehr, {\em Complex analytic methods for partial differential equations. An introductory text.}
World Scientific Publishing Co., Inc., River Edge, NJ, (1994).


\bibitem{Beg-Dai} H. Begehr and D. Dai, {\em  On continuous solutions of a generalized Cauchy-Riemann system
with more than one singularity}, J. Differential Equations \textbf{196} (2004), no. 1, 67–90.

\bibitem{Bers}L. Bers, {\em An outline of the theory of pseudoanalytic functions},
Bull. Amer. Math. Soc.
\textbf{62} (1956), 291-331

\bibitem{Bru-Mez}  B. de Lessa Victor and A. Meziani, {\em  Infinitesimal bendings for classes of two-dimensional surfaces} (2021),
Preprint.


\bibitem{Kra} V. Kravchenko, {\em  Applied pseudoanalytic function theory}.
 Frontiers in Mathematics. Birkhäuser Verlag, Basel, (2009).


\bibitem{Mez1} A. Meziani, {\em Generalized CR equation with a punctual singularity: the model case}, Complex Var. Theory Appl. \textbf{48} (2003), no. 6, 495–512.


\bibitem{Mez2} A. Meziani, {\em Generalized CR equation with a singularity}, Complex Var. Theory Appl. \textbf{48} (2003), no. 9, 739–752.


\bibitem{Mez3} A. Meziani, {\em  Representation of solutions of a singular Cauchy-Riemann equation in the plane}, Complex Var. Elliptic Equ. \textbf{53} (2008), no. 12, 1111–1130.

\bibitem{Ras-SolB} A. Rasulov and A. Soldatov, {\em Boundary value problem for a generalized Cauchy-Riemann equation with
singular coefficients}. Translation of Differ. Uravn. \textbf{52} (2016), no. 5, 637–650. Differ. Equ. \textbf{52} (2016), no. 5, 616–629.

\bibitem{Ras-Sol} A. Rasulov and A. Soldatov, {\em Generalized Cauchy–Riemann Equations with Power-law Singularities in Coefficients of Lower Order},  Problems and Applications of Operator Theory and Harmonic Analysis,  Springer (2020), 535-548.


\bibitem{Rod} Y. Rodin, {\em  Generalized analytic functions on Riemann surfaces}, Lecture Notes in Math, \textbf{1288}, Springer-Verlag (1980).

\bibitem{Usm1} Z. Usmanov, {\em On characteristics of solutions to a model generalized Cauchy-Riemann system degenerating
on a part of the boundary}, Complex Var. Elliptic Equ. \textbf{51} (2006), no. 8-11, 825–830.

\bibitem{Usm2} Z. Usmanov, {\em Generalized Cauchy-Riemann systems with a singular point},
Pitman Monographs and Surveys in Pure and Applied Mathematics, \textbf{85}. Longman, Harlow (1997).


\bibitem{Vek} I. Vekua, {\em Generalized analytic functions}, Pergamon Press, (1962).


\end{thebibliography}
\end{document}